\newtheorem{theorem}{Theorem}
\newtheorem{lemma}{Lemma}
\newtheorem{proposition}{Proposition}
\numberwithin{equation}{section}
\DeclareMathOperator*{\esssup}{ess\,sup}
\begin{document}

\title{Sharp Lower Bounds on the Manifold Widths of Sobolev and Besov Spaces
}

\author{Jonathan W. Siegel \\
 Department of Mathematics\\
 Texas A\&M University\\
 College Station, TX 77843 \\
 \texttt{jwsiegel@tamu.edu}
}

\maketitle

\begin{abstract}
    We consider the problem of determining the manifold $n$-widths of Sobolev and Besov spaces with error measured in the $L_p$-norm. The manifold widths control how efficiently these spaces can be approximated by general non-linear parametric methods with the restriction that the parameter selection and parameterization maps must be continuous. Existing upper and lower bounds only match when the Sobolev or Besov smoothness index $q$ satisfies $q\leq p$ or $1 \leq p \leq 2$. We close this gap and obtain sharp lower bounds for all $1 \leq p,q \leq \infty$ for which a compact embedding holds. A key part of our analysis is to determine the exact value of the manifold widths of finite dimensional $\ell^M_q$-balls in the $\ell_p$-norm when $p\leq q$. Although this result is not new, we provide a new proof and apply it to lower bounding the manifold widths of Sobolev and Besov spaces. Our results show that the Bernstein widths, which are typically used to lower bound the manifold widths, decay asymptotically faster than the manifold widths in many cases. 
\end{abstract}

\section{Introduction}
Due in part to the practical success of deep neural networks \cite{lecun2015deep}, non-linear methods of approximation have gained in importance in recent years. In this work, we consider limitations on general non-linear methods of approximation which use a finite number of parameters. Specifically, given a Banach space $X$ and a compact set $K\subset X$, we consider the manifold $n$-widths introduced in \cite{devore1989optimal}, defined by
\begin{equation}\label{manifold-widths-definition}
    \delta_n(K)_X := \inf_{a_n,M_n}\sup_{f\in K}\|f - M_n(a_n(f))\|_X,
\end{equation}
where the infimum is taken over all pairs of \textit{continuous} maps $a_n:K\rightarrow \mathbb{R}^n$ and $M_n:\mathbb{R}^n\rightarrow X$. We can think of the map $a_n$ as being an encoding map and the map $M_n$ as being a decoding map.

As remarked in \cite{devore1989optimal}, if the continuity assumption is dropped, this notion becomes vacuous since using a space-filling curve any set $K$ in a separable Banach space $X$ can be `parameterized' by a single real number. The manifold widths $\delta_n$ control the best possible rates of approximation using general parametric methods with $n$ parameters, if we additionally require that both the parameter selection and parameterization maps are continuous.

The problem we study in this work is the determination of the manifold widths $\delta_n(K)_X$ where $K$ is the unit ball of a Besov or Sobolev space and the error is measured in $L_p$. Let us begin by recalling the definitions of Besov and Sobolev spaces. 

Let $\Omega\subset \mathbb{R}^d$ be a bounded domain, which we take to be the unit cube $\Omega = [0,1]^d$ for simplicity. We remark that our results can be transferred to more general domains, with appropriately modified constants, in a standard manner. 

We write
$L_p(\Omega)$ for the set of functions for which the $L_p$-norm on $\Omega$, defined by
$$
    	\|f\|_{L_p(\Omega)} = \left(\int_{\Omega}|f(x)|^pdx\right)^{1/p} < \infty,
$$
is finite. When $p=\infty$, the $L_\infty$-norm is defined by $\|f\|_{L_\infty(\Omega)} = \esssup_{x\in \Omega} |f(x)|$.

Given an integer $s \geq 1$ and $1\leq q\leq \infty$, we define the Sobolev space $W^s(L_q(\Omega))$ to be the set of $f\in L_q(\Omega)$ which have weak derivatives of order $s$ in $L_q(\Omega)$ with norm given by
\begin{equation}
    \|f\|_{W^s(L_q(\Omega))} := \|f\|_{L_q(\Omega)} + \|f^{(s)}\|_{L_q(\Omega)},
\end{equation}
where $f^{(s)}$ denotes the (tensor of) weak derivatives of order $s$ (see \cite{evans2022partial}, Chapter 5, for details).

Besov spaces, which are conveniently defined in terms of moduli of smoothness, provide a well-known generalization of Sobolev spaces to non-integer smoothness $s$. For a function $f\in L_q(\Omega)$ and an integer $k \geq 1$, we define its $k$-th order modulus of smoothness by
\begin{equation}\label{definition-of-modulus-of-smoothness}
    \omega_k(f,t)_q = \sup_{|h|\leq t}\|\Delta^k_h f\|_{L_q(\Omega_{kh})},
\end{equation}
where the supremum is over $h\in \mathbb{R}^d$ and the $k$-th order finite differences $\Delta^k_h$ are defined by
$$
    \Delta^k_h f(x) = \sum_{j=0}^k (-1)^j\binom{k}{j} f(x+jh).
$$
Here the $L_q$ norm is taken over the set $\Omega_{kh} := \{x\in \Omega,~x + kh\in \Omega\}$ to guarantee that all terms of the relevant finite differences are contained in $\Omega$. We remark that when $k = 1$ and $q=\infty$, the modulus of smoothness reduces to the well-known modulus of continuity.

Given parameters $s > 0$, $1\leq r,q\leq \infty$, we define the Besov norm $B^s_r(L_q(\Omega))$ by
\begin{equation}
    \|f\|_{B^s_r(L_q(\Omega))} := \|f\|_{L_q(\Omega)} + |f|_{B^s_r(L_q(\Omega))},
\end{equation}
with Besov semi-norm given by
\begin{equation}\label{besov-semi-norm-definition}
    |f|_{B^s_r(L_q(\Omega))} := \left(\int_0^\infty \frac{\omega_k(f,t)_q^r}{t^{sr+1}}dt\right)^{1/r}
\end{equation}
when $r < \infty$, and by
\begin{equation}
    |f|_{B^s_\infty(L_q(\Omega))} := \sup_{t > 0} t^{-s}\omega_k(f,t)_q,
\end{equation}
when $r = \infty$. Here the index $k$ of the modulus of smoothness must satisfy $k > s$, and all such choices of $k$ give equivalent norms. We denote by $B^s_r(L_q(\Omega))$ the Besov space of functions $f\in L_q(\Omega)$ whose $B^s_r(L_q(\Omega))$-norms are finite. Besov spaces play a central role in approximation theory, signal processing, and applied mathematics, and the space $B^s_r(L_q(\Omega))$ can be thought of as a space of functions with $s$ derivatives in $L_q$ (with the index $r$ providing a finer gradation of these spaces). For more detailed information on Besov spaces, we refer to \cite{devore1993constructive,devore1984maximal,devore1993besov,triebel2008function}. We also remark that the commonly used fractional Sobolev spaces \cite{di2012hitchhikerʼs} correspond to Besov spaces with $q = r$. 

Consider the classes 
\begin{equation}
    \mathcal{F}_{q}^s := \{f\in W^s(L_q(\Omega)),~\|f\|_{W^s(L_q(\Omega))} \leq 1\}~\text{and}~\mathcal{B}_{r,q}^s := \{f\in B^s_r(L_q(\Omega)),~\|f\|_{B^s_r(L_q(\Omega))} \leq 1\},
\end{equation}
which are the unit balls of the corresponding Sobolev and Besov spaces, respectively.
We wish to determine (asymptotically) the manifold widths $\delta_n(\mathcal{F}_{q}^s)_{L_p(\Omega)}$ and $\delta_n(\mathcal{B}_{r,q}^s)_{L_p(\Omega)}$ for different values of $s,r,q,p$. 

In order for this problem to make sense, we need $\mathcal{F}_{q}^s$ or $\mathcal{B}_{r,q}^s$ to be a compact subset of $L_p(\Omega)$. It is well-known that this is equivalent to the Sobolev embedding condition
\begin{equation}\label{sobolev-embedding-condition}
    \frac{1}{q} - \frac{1}{p} < \frac{s}{d}
\end{equation}
for bounded domains $\Omega$ (see \cite{evans2022partial}, Chapter 5, for instance). 

To put this problem into perspective, let us discuss other classical notions of width and how they relate to the manifold widths. We will only scratch the surface of this subject here and refer to \cite{lorentz1996constructive}, Chapters 13 and 14, \cite{pinkus2012n}, and \cite{tikhomirov2012analysis}, Chapter 3 for a more detailed presentation of this material.

 Essentially, by putting different restrictions on the encoding map $a_n$ and the decoding maps $M_n$, we obtain different widths, which measure the complexity of the set $K$ in different ways. For example, consider the Kolmogorov widths \cite{kolmogoroff1936uber}, which are typically defined as
\begin{equation}
    d_n(K)_X := \inf_{V_n} d_X(K,V_n),
\end{equation}
where the infimum is taken over all linear subspaces $V_n\subset X$ of dimension at most $n$, and $$d_X(K,V_n) = \sup_{f\in K}\inf_{f_n\in V_n} \|f - f_n\|_X$$ denotes the distance of $K$ to $V_n$. The Kolmogorov widths correspond to restricting the decoding map $M_n$ to be linear with no restrictions on the encoding map $a_n$.

The Gelfand widths are defined by
\begin{equation}
    d^n(K)_X := \inf_{V_n}\sup\{\|f\|_X,~f\in K\cap V_n\},
\end{equation}
where the infimum is taken over all co-dimension $n$ subspaces $V_n$. For convex centrally symmetric sets $K$, these
correspond to restricting the encoding map $a_n$ to be linear with no restriction on the decoding map $M_n$. 

Finally, the linear widths, defined by
\begin{equation}
    d^L_n(K)_X := \inf_{T_n}\sup_{f\in K}\|f - Tf\|_X,
\end{equation}
where the infimum is taken over all rank $n$ operators $T:X\rightarrow X$,
correspond to restricting both $a_n$ and $M_n$ to be linear. We remark that there are numerous other non-linear widths which have recently been introduced and which fit into this framework, such as the stable manifold widths \cite{cohen2022optimal} and the Lipschitz widths \cite{petrova2023lipschitz}. These new notions of non-linear widths enforce conditions stronger than just continuity which are arguably more relevant to practical applications.

Another important and closely related notion of width, which cannot be put into the same framework based upon restricting the encoding and decoding maps $a_n$ and $M_n$, are the Bernstein widths \cite{tikhomirov1971some}, defined by
\begin{equation}
    b_n(K)_X = \sup_{V_n}\inf_{f\in V_n\cap K^c} \|f\|_X.
\end{equation}
Here the supremum is over all subspaces $V_n\subset X$ of dimension $n+1$. The Bernstein widths measure the size of the largest ball completely contained in an $(n+1)$-dimensional section of $K$. The importance of the Bernstein widths stems partially from the fact that they provide a lower bound on the Gelfand widths, linear widths, Kolmogorov widths and manifold widths via the Borsuk-Ulam theorem \cite{borsuk1933drei} (see \cite{devore1989optimal} and \cite{lorentz1996constructive}, Chapter 13).

Finally, let us mention that the manifold widths are closely related to the Aleksandrov widths defined via $n$-dimensional cell complexes (see \cite{tikhomirov2012analysis}, Chapter 3 and \cite{devore1993wavelet,stesin1975aleksandrov}). Indeed, as a consequence of the Pontryagin-N\"obeling principle (see \cite{aleksandrov1998combinatorial}, Chapter 4, Theorem 1.9) it can be shown that the manifold widths and Aleksandrov widths are equivalent up to possibly shifting the index $n$ by a factor of $2$ (see Lemma 2.2 in \cite{devore1993wavelet}).

In the following, we consider widths of the compact sets $\mathcal{F}_{q}^s$ and $\mathcal{B}_{r,q}^s$, consisting of the unit balls of Sobolev and Besov spaces which compactly embed in $L_p(\Omega)$. 

The asymptotic rate of decay of the Kolmogorov, Gelfand, linear, and Bernstein widths of these sets are known. These are results of a rather deep theory developed during the past century. We refer to \cite{lorentz1996constructive}, Chapters 13 and 14, and \cite{pinkus2012n}, Chapter 7, and the references contained in both of these books for the theory giving the Kolmogorov, Gelfand, and linear $n$-widths of Sobolev and Besov spaces. The determination of the Bernstein widths can be found in \cite{tikhomirov2012analysis}, Chapter 3 and the references therein (see also \cite{li2013bernstein,feng2009bernstein}). Since we will use them later, let us recall that the Bernstein widths are given by
\begin{equation}\label{bernstein-widths-asymptotics}
    b_n(\mathcal{F}_{q}^s)_{L_p(\Omega)} \eqsim b_n(\mathcal{B}_{r,q}^s)_{L_p(\Omega)} \eqsim \begin{cases}
        n^{-s/d} & p \geq q~\text{or}~1 \leq p \leq q \leq 2\\
        n^{-s/d + 1/q - 1/2} & 1 \leq p \leq 2 \leq q\leq \infty\\
        n^{-s/d + 1/q - 1/p} & 2 \leq p \leq q \leq \infty,
    \end{cases}
\end{equation}
whenever the compact embedding condition \eqref{sobolev-embedding-condition} holds.

In contrast, the manifold widths $\delta_n$ have not been determined for the Sobolev and Besov balls $\mathcal{F}_{q}^s$ and $\mathcal{B}_{r,q}^s$ with respect to the $L_p$-norm in all cases for which the compact embedding \eqref{sobolev-embedding-condition} holds.

This problem, which quantifies the limits of continuous non-linear approximation for classical smoothness spaces, was first considered in \cite{stesin1975aleksandrov} for the closely related Aleksandrov widths in place of the manifold widths. There the Sobolev unit ball $\mathcal{F}_{q}^s$ for integer $s \geq 1$ in one-dimension is considered, and it is shown that Aleksandrov widths decay like $n^{-s}$ (with upper and lower bounds matching up to a constant). 

The case of general Besov spaces (and manifold widths instead of Aleksandrov widths) was first considered in \cite{devore1989optimal}. There it is shown using spline approximation that in one dimension we have the upper bound
\begin{equation}
\delta_n(\mathcal{B}_{r,q}^s)_{L_p(\Omega)} \leq Cn^{-s}
\end{equation}
for integral $s$ and all $s,r,q$ and $p$ for which \eqref{sobolev-embedding-condition} holds (and the analogous result for the Sobolev ball $\mathcal{F}_{q}^s$). 
In \cite{devore1993wavelet} this upper bound was extended to all dimensions $d$ and smoothness $s > 0$, i.e. it was shown that
\begin{equation}\label{manifold-upper-bound} \delta_n(\mathcal{B}_{r,q}^s)_{L_p(\Omega)} \leq Cn^{-s/d}
\end{equation}
as long as \eqref{sobolev-embedding-condition} holds (here the constants $C$ are independent of $n$ but depend upon the parameters $s,r,p,q$).

 Concerning lower bounds, in \cite{devore1989optimal} it is shown that if $q\leq p$ or if $1 \leq p \leq q\leq 2$, then the upper bound in \eqref{manifold-upper-bound} is sharp, i.e. that in this case we have the bound
\begin{equation}\label{devore-lower-bounds}
    \delta_n(\mathcal{B}_{r,q}^s)_{L_p(\Omega)} \geq Cn^{-s/d}
\end{equation}
for another (potentially different) constant $C$. The tool used to prove lower bounds on the manifold widths are the Bernstein widths. In particular, utilizing the Borsuk-Ulam theorem \cite{borsuk1933drei} one can show that (see \cite{devore1989optimal}, Theorem 3.1)
\begin{equation}\label{bernstein-manifold-lower-bound}
    b_n(K)_X \leq \delta_n(K)_X
\end{equation}
for any compact set $K\subset X$. Applying the known asymptotics of the Bernstein widths \eqref{bernstein-widths-asymptotics}, we obtain \eqref{devore-lower-bounds} when either $q \leq p$ or $1 \leq p \leq q \leq 2$.

However, in the case $p < q$ and $q > 2$ the Bernstein widths decay faster than the upper bound \eqref{manifold-upper-bound} and this method fails to give a sharp result. We remark that the metric entropy also cannot be used to give lower bounds since, as noted in \cite{cohen2022optimal}, the manifold widths do not obey a Carl's inequality which would relate them to the entropy. The goal of this work is to extend the lower bound \eqref{devore-lower-bounds} to this case and thus to show that the bound \eqref{manifold-upper-bound} is always sharp. This completes the determination of the asymptotic decay of the manifold widths of Sobolev and Besov spaces in all cases. Since the Bernstein widths and metric entropy fail to give a sharp lower bound, this requires new tools.

In order to bridge this gap, we introduce a new notion of width, which we call the \textit{sphere embedding widths}, defined by
\begin{equation}\label{sphere-embedding-widths-def}
    s_n(K)_X := \sup_{c:S^n\rightarrow K}\inf_{z\in S^n} \|c(z)\|_X.
\end{equation}
Here $S^n$ denotes the $n$-dimensional unit sphere
$$
    S^n = \{x\in \mathbb{R}^{n+1},~\|x\| = 1\},
$$
and the supremum above is over all continuous and odd (i.e. $c(-x) = -c(x)$) functions $c:S^n\rightarrow K$. The sphere embedding widths more precisely capture the obstruction imposed by the Borsuk-Ulam theorem and in Proposition \ref{sphere-manifold-lower-bound-prop} in Section \ref{manifold-widths-lower-bound-section} we show that
\begin{equation}\label{sphere-embedding-general-bound}
    s_n(K)_X \leq \delta_n(K)_X,
\end{equation}
analogous to \eqref{bernstein-manifold-lower-bound}. However, the sphere embedding widths can decay slower than the Bernstein widths. In particular, for the classes $\mathcal{B}_{r,q}^s\subset L_p(\Omega)$ we show in the proof of Theorem \ref{main-lower-bound-theorem} that
\begin{equation}
    s_n(\mathcal{B}_{r,q}^s)_{L_p(\Omega)} \geq Cn^{-s/d}
\end{equation}
for all $s > 0$, $1\leq r,p,q\leq \infty$ for which the Sobolev embedding condition \eqref{sobolev-embedding-condition} holds. This extends the lower bound \eqref{devore-lower-bounds} to all $1 \leq p, q \leq \infty$.

The key ingredient in the proof is a lower bound on the sphere embedding widths of unit $\ell_q^M$-balls with respect to the $\ell_p$-norm. Indeed, writing 
\begin{equation}\label{ell-q-unit-ball-definition}
    K_q^M := \left\{x\in \mathbb{R}^M,~\sum_{i=1}^M |x_i|^q \leq 1\right\}
\end{equation}
for the unit ball in $\ell_q^M$, we prove in Proposition \ref{sphere-embedding-lq-lower-bounds-prop} that
\begin{equation}
    s_n(K_q^M)_{\ell_p} \geq (M - n)^{1/p - 1/q}
\end{equation}
for $n < M$. This enables us to determine the precise value of the manifold widths of unit $\ell_q^M$-balls with respect to the $\ell_p$-norm in the regime $p \leq q$. In particular, we show in Theorem \ref{manifold-widths-lq-lp-theorem} that
\begin{equation}
    \delta_n(K^M_q)_p = (M - n)^{1/p - 1/q}
\end{equation}
whenever $p \leq q$.
This result was first proved in \cite{khodulev1989remark}, and although it is not new, we provide a new proof using the sphere embedding widths. We remark that combined with the results in \cite{devore1989optimal,devore1993wavelet} and \cite{tikhomirov2012analysis}, Chapter 3, which determine the manifold widths (and Aleksandrov widths) of $\ell_q^M$-balls with respect to the $\ell_p$-norm in the regime $p \geq q$, this gives the manifold widths of finite dimensional $\ell_q$-balls.

\section{Sharp Bounds on the Manifold Widths of $\ell_q^M$-balls}\label{manifold-widths-lp-balls}
In this Section, we precisely determine the manifold widths of the $\ell_q^M$ unit ball with respect to the $\ell_p$-norm in the regime $p \leq q$. These are given by the following Theorem, first proved in \cite{khodulev1989remark}.
\begin{theorem}[Main result in \cite{khodulev1989remark}]\label{manifold-widths-lq-lp-theorem}
    Let $1\leq n < M$ be integers, $0< p \leq q\leq \infty$ and let $K^M_q$ denote the $\ell_q^M$ unit ball as in \eqref{ell-q-unit-ball-definition}. Then the manifold widths $\delta_n$ satisfy
    \begin{equation}
        \delta_n(K^M_q)_p = (M - n)^{1/p - 1/q}.
    \end{equation}
\end{theorem}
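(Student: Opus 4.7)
The plan is to verify the equality by proving matching upper and lower bounds on $\delta_n(K_q^M)_p$.

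The upper bound $\delta_n(K_q^M)_p \leq (M-n)^{1/p-1/q}$ I would establish by an explicit linear construction. Take the encoder $a_n: \mathbb{R}^M \to \mathbb{R}^n$ as projection onto the first $n$ coordinates, $a_n(x) = (x_1, \dots, x_n)$, and the decoder $M_n: \mathbb{R}^n \to \mathbb{R}^M$ as the zero-padded embedding $M_n(y) = (y_1, \dots, y_n, 0, \dots, 0)$. Both maps are linear, hence continuous. For $x \in K_q^M$, the approximation error is
\begin{equation}
    \|x - M_n(a_n(x))\|_p = \|(x_{n+1}, \dots, x_M)\|_p \leq (M-n)^{1/p - 1/q}\|(x_{n+1}, \dots, x_M)\|_q \leq (M-n)^{1/p - 1/q},
\end{equation}
where the first inequality is H\"older's inequality on $\mathbb{R}^{M-n}$ (valid since $p \leq q$) and the second uses $x \in K_q^M$.

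For the matching lower bound $\delta_n(K_q^M)_p \geq (M-n)^{1/p - 1/q}$, I would invoke the sphere embedding width machinery introduced in the paper. The general inequality \eqref{sphere-embedding-general-bound} (proved in Proposition \ref{sphere-manifold-lower-bound-prop}) states that $s_n(K)_X \leq \delta_n(K)_X$ for any compact $K \subset X$. Applying this with $K = K_q^M$ and $X = \ell_p^M$ gives
\begin{equation}
    \delta_n(K_q^M)_p \geq s_n(K_q^M)_p \geq (M-n)^{1/p - 1/q},
\end{equation}
where the second inequality is precisely the content of Proposition \ref{sphere-embedding-lq-lower-bounds-prop}. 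Combining with the upper bound yields the claimed equality.

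Thus the proof of Theorem \ref{manifold-widths-lq-lp-theorem} itself is short once both ingredients are available: a one-line H\"older estimate for the upper bound and a two-step chain of inequalities for the lower bound. The genuine difficulty is packaged into Proposition \ref{sphere-embedding-lq-lower-bounds-prop}, which demands the construction of an odd continuous map from $S^n$ into $K_q^M$ whose image avoids a ball of the specified radius; I expect this to be the main obstacle, and presumably it is where a Borsuk-Ulam type topological argument is used to beat the Bernstein-width bound in the regime $p < q$, $q > 2$.
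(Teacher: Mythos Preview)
Your proposal is correct and matches the paper's proof essentially line for line: the same coordinate-projection/zero-padding pair with H\"older for the upper bound, and the same chain $\delta_n \geq s_n \geq (M-n)^{1/p-1/q}$ via Propositions~\ref{sphere-manifold-lower-bound-prop} and~\ref{sphere-embedding-lq-lower-bounds-prop} for the lower bound. One small clarification: the Borsuk--Ulam argument actually lives in Proposition~\ref{sphere-manifold-lower-bound-prop} (the inequality $s_n \leq \delta_n$), while Proposition~\ref{sphere-embedding-lq-lower-bounds-prop} is a purely constructive result built from a generic-position subspace and a thresholding map.
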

Although this result is not new, we give a new proof using the sphere embedding widths \eqref{sphere-embedding-widths-def}, which is based on fundamentally different ideas than the argument in \cite{khodulev1989remark}. We believe that our method of proof using the sphere embedding widths may have potential applications to other related problems.

Theorem \ref{manifold-widths-lq-lp-theorem} consists of two parts, an upper bound and a lower bound. The upper bound is simple and is obtained by letting the encoding map $a_n$ take the first $n$-coordinates of $x\in \mathbb{R}^M$, and the decoding map $M_n$ be the inclusion of $\mathbb{R}^n$ into $\mathbb{R}^M$ which sets the last $M-n$ coordinates to $0$. Then for any $x\in \mathbb{R}^M$, we easily see that
\begin{equation}
    x - M_n(a_n(x)) = P_{M-n}(x),
\end{equation}
where $P_{M-n}$ is the projection which sets the first $n$ coordinates of $x$ to $0$. We thus see that
\begin{equation}\label{upper-bound-manifold-equation}
    \delta_n(K^M_q)_p \leq \sup_{x\in \mathbb{R}^M} \frac{\left(\sum_{i=n+1}^M |x_i|^p\right)^{1/p}}{\left(\sum_{i=1}^M |x_i|^q\right)^{1/q}}.
\end{equation}
Since $p \leq q$, H\"older's inequality implies that
\begin{equation}
    \sum_{i=n+1}^M |x_i|^p \leq \left(\sum_{i=n+1}^M |x_i|^q\right)^{p/q}(M-n)^{1-p/q}
\end{equation}
Taking $p$-th roots and plugging this into \eqref{upper-bound-manifold-equation} gives
\begin{equation}\label{manifold-widths-lq-lp-upper-bound-equation}
    \delta_n(K^M_q)_p \leq (M - n)^{1/p - 1/q}.
\end{equation}
Here we must make the obvious modifications if $q = \infty$.
The lower bound in Theorem \ref{manifold-widths-lq-lp-theorem} is more subtle and we prove this using the sphere embedding widths $s_n$ defined in \eqref{sphere-embedding-widths-def}. We first show using the Borsuk-Ulam Theorem that the sphere embedding widths lower bound the manifold widths.
\begin{proposition}\label{sphere-manifold-lower-bound-prop}
    For any compact $K\subset X$ we have the bound
    \begin{equation}
        s_n(K)_X \leq \delta_n(K)_X
    \end{equation}
\end{proposition}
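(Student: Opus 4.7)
The plan is to use the Borsuk--Ulam theorem directly, applied to the composition of the encoding map with a candidate sphere embedding. Fix an arbitrary continuous odd map $c: S^n \to K$ and an arbitrary pair $(a_n, M_n)$ of continuous maps realizing (approximately) the infimum in $\delta_n(K)_X$. Since $c(S^n) \subset K$, the composition $a_n \circ c$ is a well-defined continuous map $S^n \to \mathbb{R}^n$, so Borsuk--Ulam yields a point $z \in S^n$ with $a_n(c(z)) = a_n(c(-z))$.

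Setting $y := M_n(a_n(c(z))) = M_n(a_n(c(-z)))$, I would then exploit oddness of $c$ together with the triangle inequality: since $c(-z) = -c(z)$, we have
\begin{equation}
2\|c(z)\|_X = \|c(z) - y + y - c(-z)\|_X \leq \|c(z) - y\|_X + \|c(-z) - y\|_X,
\end{equation}
so at least one of the two summands on the right is at least $\|c(z)\|_X$. Since both $c(z)$ and $c(-z)$ lie in $K$, this forces
\begin{equation}
\sup_{f \in K}\|f - M_n(a_n(f))\|_X \;\geq\; \|c(z)\|_X \;\geq\; \inf_{z' \in S^n}\|c(z')\|_X.
\end{equation}

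Taking the infimum over admissible pairs $(a_n, M_n)$ on the left gives $\delta_n(K)_X \geq \inf_{z' \in S^n}\|c(z')\|_X$, and then taking the supremum over admissible odd continuous $c: S^n \to K$ yields the claim. There is no serious obstacle: the only subtleties are verifying that $a_n \circ c$ really does map into $\mathbb{R}^n$ (so Borsuk--Ulam applies as stated) and that both $c(z)$ and $c(-z)$ lie in $K$ so that they are legitimate competitors in the supremum defining $\delta_n$; both follow immediately from the hypothesis that $c$ takes values in $K$.
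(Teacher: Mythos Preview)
Your proof is correct and follows essentially the same approach as the paper: apply Borsuk--Ulam to the composition $a_n\circ c$, use oddness of $c$ to identify $c(-z)=-c(z)$, and then use the triangle inequality to conclude that the approximation error at either $c(z)$ or $c(-z)$ is at least $\|c(z)\|_X$. The paper phrases the last step as $\max\{\|f^*-y\|_X,\|f^*+y\|_X\}\ge\|f^*\|_X$, which is exactly your triangle-inequality computation in different notation.
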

\begin{proof}
    This follows essentially the same argument as the proof of Theorem 3.1 in \cite{devore1989optimal} using the Borsuk-Ulam Theorem \cite{borsuk1933drei}. 
    
    Let $c:S^n\rightarrow K$ be a continuous odd map from the $n$-dimensional sphere $S^n$ into $K$, and $a_n:K\rightarrow \mathbb{R}^n$ an arbitrary continuous map. Then the composition $a_n\circ c$ is a continuous map $S^n\rightarrow \mathbb{R}^n$ so by the Borsuk-Ulam theorem there is a point $z\in S^n$ such that $a_n(c(z)) = a_n(c(-z))$. Let $f^* = c(z)\in K$. Since $c$ is odd we have $-f^* = c(-z)$, and thus $a_n(f^*) = a_n(-f^*)$. Then for any reconstruction map $M_n:\mathbb{R}^n\rightarrow K$, we have
    \begin{equation}
    \begin{split}
        \sup_{f\in K} \|f - M_n(a_n(f))\|_{X}&\geq \max\left\{\|f^* - M_n(a_n(f^*))\|_{X},\|f^* + M_n(a_n(f^*))\|_{X}\right\}\\
        &\geq \|f^*\|_{X} \geq \inf_{z\in S^n} \|c(z)\|_X.
    \end{split}
    \end{equation}
    Since $M_n,a_n$ and $c$ are arbitrary we get $s_n(K)_X \leq \delta_n(K)_X$ as desired.
\end{proof}

Next, we lower bound the sphere embedding widths for the $\ell_q^N$-unit ball with respect to the $\ell_p$-norm. This is the main new technical result of the paper.
\begin{proposition}\label{sphere-embedding-lq-lower-bounds-prop}
    Let $1\leq n < M$ be integers, $0< p \leq q\leq \infty$ and let $K^M_q$ denote the $\ell_q^M$ unit ball as in \eqref{ell-q-unit-ball-definition}. Then the sphere embedding widths satisfy
    \begin{equation}
        s_n(K^M_q)_p \geq (M - n)^{1/p - 1/q}
    \end{equation}
\end{proposition}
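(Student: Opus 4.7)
The plan is to exhibit an odd continuous map $c : S^n \to K_q^M$ satisfying $\|c(z)\|_p \geq (M-n)^{1/p-1/q}$ for every $z$, which will immediately yield the desired lower bound on $s_n(K_q^M)_p$. The construction has two ingredients: a geometric choice of direction vectors in $\mathbb{R}^{n+1}$ and a saturation nonlinearity composed with them.

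First I would choose $M$ vectors $u_1,\dots,u_M\in\mathbb{R}^{n+1}$ in general linear position, meaning that every $n+1$ of them are linearly independent; $M$ distinct points on the moment curve $t\mapsto (1,t,\dots,t^n)$ work. Since any $n+1$ of the $u_i$ span $\mathbb{R}^{n+1}$, for every $z\in S^n$ and every $(n+1)$-subset $S\subset\{1,\dots,M\}$ we have $\max_{j\in S}|\langle z,u_j\rangle|>0$. Compactness of $S^n$ and finiteness of the collection of $(n+1)$-subsets then produce a uniform $\delta>0$ with $\max_{j\in S}|\langle z,u_j\rangle|\geq\delta$ for all such $z$ and $S$. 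Equivalently, for every $z$ the set $\{i:|\langle z,u_i\rangle|<\delta\}$ has at most $n$ elements, so at least $M-n$ indices satisfy $|\langle z,u_i\rangle|\geq\delta$.

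Let $\phi(t):=\sign(t)\min(|t|/\delta,1)$, a continuous odd function valued in $[-1,1]$ with $|\phi(t)|=1$ whenever $|t|\geq\delta$. Set $c_i(z):=\phi(\langle z,u_i\rangle)$ and the normalized map $\tilde c(z):=c(z)/\|c(z)\|_q$. By the previous paragraph at least $M-n$ coordinates of $c(z)$ have absolute value one, so $c(z)\neq 0$ and $\tilde c$ is a well-defined continuous odd map $S^n\to K_q^M$. The estimate then reduces to the following finite-dimensional lemma: if $a\in[0,1]^M$ has at least $k$ entries equal to $1$ and $0<p\leq q\leq\infty$, then $\|a\|_p\geq k^{1/p-1/q}\|a\|_q$. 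To prove the lemma, let $S=\{i:a_i=1\}$, $b=a|_{S^c}$, $s=|S|\geq k$, $\alpha=\|b\|_p^p$, $\beta=\|b\|_q^q$; the pointwise inequality $b_i^p\geq b_i^q$ on $[0,1]$ gives $\alpha\geq\beta$, so
\[
\|a\|_p^p=s+\alpha\geq s+\beta=(s+\beta)^{1-p/q}(s+\beta)^{p/q}\geq k^{1-p/q}\|a\|_q^{p},
\]
using $s+\beta\geq k$ and $1-p/q\geq 0$. Applying the lemma to $a_i=|c_i(z)|$ gives $\|c(z)\|_p\geq(M-n)^{1/p-1/q}\|c(z)\|_q$, i.e.\ $\|\tilde c(z)\|_p\geq(M-n)^{1/p-1/q}$ for every $z\in S^n$, which is exactly what is needed.

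I expect the principal obstacle to be the simultaneous requirement that $c$ be odd, continuous, and approximately coordinate-saturated at every point. The Bernstein bound shows that no linear $c$ can succeed in the regime $p<q$, $q>2$, so a genuinely nonlinear odd map is necessary; the combination of the general-position vectors $u_i$ with the saturating nonlinearity $\phi$ is the crucial device that a purely linear construction lacks, and extracting the uniform threshold $\delta$ from the general-position condition is the main technical step. After that the inequality $\|a\|_p\geq k^{1/p-1/q}\|a\|_q$ for coordinate-saturated vectors is essentially formal.
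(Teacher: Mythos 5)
Your proof is correct and takes essentially the same approach as the paper: both construct an odd map by first parameterizing $S^n$ linearly into $\mathbb{R}^M$ via a general-position configuration (your moment-curve vectors $u_i$ are exactly the rows of the $M\times(n+1)$ matrix whose columns the paper takes to be an orthonormal basis of a generic subspace $V_{n+1}$), then applying the saturating odd nonlinearity ($\phi$ here, $t_\epsilon$ there) coordinatewise and normalizing in $\ell_q$. The concluding comparison of $\ell_p$ and $\ell_q$ norms for vectors in $[-1,1]^M$ with at least $M-n$ unit-magnitude entries is the same elementary estimate in both arguments.
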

Combining this with Proposition \ref{sphere-manifold-lower-bound-prop} and the upper bound \eqref{manifold-widths-lq-lp-upper-bound-equation} proves Theorem \ref{manifold-widths-lq-lp-theorem}.

The proof of Proposition \ref{sphere-embedding-lq-lower-bounds-prop} depends upon the following elementary Lemma.
\begin{lemma}\label{non-zero-coordinate-lemma}
    Let $1\leq N \leq M$ be integers. There exists an $N$-dimensional subspace $V_N\subset \mathbb{R}^{M}$ such that for any $0\neq x\in V_N$ we have
    \begin{equation}
        |\{i:~x_i=0\}| < N.
    \end{equation}
    In other words, a non-zero vector $x\in V_N$ cannot vanish in $N$ coordinates.
\end{lemma}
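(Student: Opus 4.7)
The plan is to exhibit $V_N$ explicitly using a Vandermonde-type construction, which is essentially the standard way to produce a subspace all of whose $N \times N$ coordinate minors are nonzero.

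Concretely, I would pick $M$ pairwise distinct real numbers $t_1, t_2, \ldots, t_M$ and define
\begin{equation}
    V_N := \{(p(t_1), p(t_2), \ldots, p(t_M)) : p \in \mathcal{P}_{N-1}\} \subset \mathbb{R}^M,
\end{equation}
where $\mathcal{P}_{N-1}$ is the space of real polynomials of degree at most $N-1$. The evaluation map $p \mapsto (p(t_1),\ldots,p(t_M))$ is linear, and its kernel consists of polynomials of degree $<N$ that vanish at the $M \geq N$ distinct points $t_1,\ldots,t_M$; by the fundamental theorem of algebra this forces $p=0$, so the map is injective and $V_N$ has dimension exactly $N$.

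Next, I would verify the non-vanishing property. Take any $0 \neq x \in V_N$ and write $x_i = p(t_i)$ for some nonzero $p \in \mathcal{P}_{N-1}$. Since $p$ has degree at most $N-1$, it has at most $N-1$ real roots. The set $\{i : x_i = 0\}$ is in bijection with a subset of the root set of $p$ (because the $t_i$ are distinct), so
\begin{equation}
    |\{i : x_i = 0\}| \leq N-1 < N,
\end{equation}
which is exactly the claim.

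There is no real obstacle here; the only thing worth noting is why the construction works. Equivalently, one could phrase the proof in matrix form by observing that $V_N$ is the column span of the $M \times N$ Vandermonde matrix $A_{ij}=t_i^{j-1}$, and every $N \times N$ submatrix of $A$ obtained by choosing $N$ rows is again a Vandermonde matrix on distinct nodes, hence invertible; this prevents any nonzero $x \in V_N$ from vanishing on a set of $N$ coordinates. Either formulation gives a self-contained one-paragraph proof.
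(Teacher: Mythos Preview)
Your proof is correct. Both your argument and the paper's reduce the lemma to the same matrix-theoretic fact: it suffices to produce an $M\times N$ matrix $A$ whose column span is $V_N$ and whose every $N\times N$ minor is invertible, since then $x=Ac$ vanishing on any $N$ coordinates forces $c=0$. The difference is only in how this matrix is exhibited. The paper argues probabilistically, observing that a Gaussian random $M\times N$ matrix has all $N\times N$ minors nonsingular with probability one. You instead give an explicit deterministic construction via the Vandermonde matrix $A_{ij}=t_i^{j-1}$ on distinct nodes, and your polynomial-root-counting argument is just the classical reason every such minor is invertible. Your version is slightly more elementary and fully constructive; the paper's has the advantage of making clear that the property is generic. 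Either way the proof is one paragraph.
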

\begin{proof}
    Indeed, a randomly chosen subspace of $\mathbb{R}^{M}$ has this property with probability $1$. To see this, consider the $M\times N$ matrix $A$ whose columns are a basis for $V_N$. If every $N\times N$ minor of $A$ is non-singular, then $V_N$ has the desired property. Indeed, let $x\in V_N$ which means that
    \begin{equation}\label{x-equation}
        x = \sum_{i=1}^N c_ia^i,
    \end{equation}
    where $a^i$ are the columns of the matrix $A$ and $c_i$ are coefficients. For any set $S\subset \{1,...,M\}$ of $N$ coordinates, we then have
    $$
        c = A_S^{-1}x_S,
    $$
    where $c$ denotes the vector of coefficients in \eqref{x-equation}, $A_S$ denotes the $N\times N$ minor of $A$ corresponding to $S$ (which is non-singular by assumption), and $x_S$ denotes the vector $x$ restricted to the coordinates in $S$. Thus, if $x$ vanishes in the coordinates $S$, then $c = 0$ so $x = 0$.

    For example, taking the span of the columns of a Gaussian random matrix of size $M\times N$ gives such a subspace with probability $1$, since with probability $1$ each $N\times N$ minor of this matrix is non-singular.
\end{proof}
\begin{proof}[Proof of Proposition \ref{sphere-embedding-lq-lower-bounds-prop}]
    We use Lemma \ref{non-zero-coordinate-lemma} to construct a continuous antipodal map $c:S^{n}\rightarrow K_q^M$ such that
    \begin{equation}
        \|c(z)\|_{\ell_p} \geq (M - n)^{1/p - 1/q}
    \end{equation}
    for all $z\in S^n$. Choose a subspace $V_{n+1}\subset \mathbb{R}^M$ of dimension $n+1$ which satisfies the conditions of Lemma \ref{non-zero-coordinate-lemma} with $N = n+1$, i.e. such that any $0\neq x\in V_{n+1}$ can vanish in at most $n$ coordinates. Let $a_1,...,a_{n+1}\in \mathbb{R}^M$ be an orthonormal basis for this space and consider the map $P:S^n\rightarrow \mathbb{R}^M$ given by
    \begin{equation}
        P(z) = \sum_{i=1}^{n+1} z_ia_i,
    \end{equation}
    which simply parameterizes the sphere in the space $V_{n+1}$. Let
    \begin{equation}
        0 \leq \pi_{(1)}(z) \leq \pi_{(2)}(z) \leq \cdots \leq \pi_{(M)}(z)
    \end{equation}
    be a non-decreasing rearrangement of the magnitudes of the coordinates of $P(z)$, and observe that each $\pi_{(i)}$ is a continuous function of $z$. 
    
    By construction, $\pi_{(n+1)}(z) > 0$ for all $z\in S^d$ and since the sphere is compact, we get
    \begin{equation}
        \epsilon := \inf_{z\in S^d} \pi_{(n+1)}(z) > 0.
    \end{equation}
    Define the continuous function
    \begin{equation}
    t_\epsilon(x) = \begin{cases}
        -1 & x\leq -\epsilon\\
        x/\epsilon & -\epsilon\leq x\leq \epsilon\\
        1 & x\geq \epsilon,
    \end{cases}
    \end{equation}
    and define a map $\tilde{P}$ by applying $t_\epsilon$ coordinate-wise to $P(z)$, i.e. we define
    \begin{equation}
        \tilde{P}(z)_i = t_\epsilon(P(z)_i).
    \end{equation}
    Finally, we define the map $c(z)$ by normalizing $\tilde{P}(z)$ in the $\ell_q$-norm, i.e. we set
    \begin{equation}
        c(z) = \frac{\tilde{P}(z)}{\|\tilde{P}(z)\|_{\ell_q}}.
    \end{equation}
    Obviously, by construction we have that $c(z)\in K_q^M$. 
    
    Now let $z\in S^d$.
    From the definition of $\epsilon$, we see that for every $z\in S^n$ at most $n$ coordinates of $P(z)$ have magnitude smaller than $\epsilon$. Hence, upon applying $t_\epsilon$, this implies that at least $M - n$ coordinates of $\tilde{P}(z)$ are $\pm 1$. Letting $x_1,...,x_n$ denote the remaining coordinates, we see that
    \begin{equation}\label{c-lower-bound-equation-452}
        \|c(z)\|_{\ell_p} = \frac{\|\tilde{P}(z)\|_{\ell_p}}{\|\tilde{P}(z)\|_{\ell_q}} = \frac{[(M - n) + \sum_{i=1}^n |x_i|^p]^{1/p}}{[(M - n) + \sum_{i=1}^n |x_i|^q]^{1/q}} \geq \frac{[(M - n) + \sum_{i=1}^n |x_i|^p]^{1/p}}{[(M - n) + \sum_{i=1}^n |x_i|^p]^{1/q}},
    \end{equation}
    since $p\leq q$ and all $|x_i|\leq 1$, so that $|x_i|^q \leq |x_i|^p$. This implies, since $1/p - 1/q \geq 0$, that
    \begin{equation}
        \|c(z)\|_{\ell_p} \geq \left[(M - n) + \sum_{i=1}^n |x_i|^p\right]^{1/p - 1/q} \geq (M-n)^{1/p - 1/q}
    \end{equation}
    for all $z\in S^d$ as desired.
\end{proof}

\section{Lower Bounds on Manifold Widths of Sobolev and Besov Spaces}\label{manifold-widths-lower-bound-section}
In this Section, we utilize Propositions \ref{sphere-manifold-lower-bound-prop} and  \ref{sphere-embedding-lq-lower-bounds-prop} to obtain the following lower bound on the manifold widths of Sobolev and Besov unit balls, which matches the upper bounds proved in \cite{devore1989optimal,devore1993wavelet} in all cases.
\begin{theorem}\label{main-lower-bound-theorem}
    Let $\Omega = [0,1]^d$ be the unit cube and $1\leq p,q,r\leq \infty$. For $n \geq 1$, the manifold widths of the unit ball $\mathcal{B}_{r,q}^s$ of the Besov space $B^s_r(L_q(\Omega))$ satisfy
    \begin{equation}
        \delta_n(\mathcal{B}_{r,q}^s)_{L_p(\Omega)} \geq Cn^{-s/d}
    \end{equation}
    for a constant $C:=C(s,d,p,q)$ independent of $n$.
\end{theorem}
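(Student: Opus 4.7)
The plan is to reduce the desired lower bound for Besov balls to the sphere embedding-width estimate for $\ell_q^M$-balls (Proposition \ref{sphere-embedding-lq-lower-bounds-prop}) by constructing a continuous, odd, linear embedding of $K_q^M$ into $\mathcal{B}_{r,q}^s$ built from scaled, translated bumps on $\Omega$. For $q \leq p$ the bound $\delta_n \geq Cn^{-s/d}$ is already contained in \cite{devore1989optimal} via the Bernstein asymptotics \eqref{bernstein-widths-asymptotics} and the inequality \eqref{bernstein-manifold-lower-bound}, so I concentrate on the regime $p \leq q$ (precisely the case left open by the Bernstein argument when $q>2$).

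Pick an integer $k$ with $2n \leq M := k^d \leq Cn$, set $h = 1/k$, and partition $\Omega$ into $M$ congruent subcubes $Q_1,\dots,Q_M$ of side $h$. Fix a nonzero $\phi_0 \in C_c^\infty((0,1)^d)$ and let $\phi_i(x) := \phi_0((x - x_{i,0})/h)$ be the translated rescaling of $\phi_0$ supported in the interior of $Q_i$. The change of variables $y = (x - x_{i,0})/h$ in the definitions of the $L_p$-norm and of the modulus of smoothness $\omega_k$ yields
\begin{equation}
    \|\phi_i\|_{L_p(\Omega)} = c_1 h^{d/p}, \qquad \|\phi_i\|_{B^s_r(L_q(\Omega))} \leq c_2 h^{d/q - s},
\end{equation}
with constants depending only on $\phi_0, s, q, r, d$ (with the obvious modifications when $p$ or $q$ equals $\infty$). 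Because the $\phi_i$ have pairwise well-separated supports, a direct computation and a termwise estimate of $\Delta^k_{\tilde h}$ across different supports give
\begin{equation}
    \Big\|\sum_{i=1}^M c_i \phi_i\Big\|_{L_p(\Omega)} = c_1 h^{d/p} \|c\|_{\ell_p}, \qquad \Big\|\sum_{i=1}^M c_i \phi_i\Big\|_{B^s_r(L_q(\Omega))} \leq c_3 h^{d/q - s} \|c\|_{\ell_q}
\end{equation}
for every $c \in \mathbb{R}^M$. Choosing $\alpha := 1/c_3$, the linear map
\begin{equation}
    \psi(c) := \alpha h^{s - d/q} \sum_{i=1}^M c_i \phi_i
\end{equation}
sends $K_q^M$ continuously and oddly into $\mathcal{B}_{r,q}^s$ and satisfies $\|\psi(c)\|_{L_p(\Omega)} \geq c_4 h^{s + d/p - d/q} \|c\|_{\ell_p}$.

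Applying Proposition \ref{sphere-embedding-lq-lower-bounds-prop} yields a continuous odd map $c_0 : S^n \to K_q^M$ with $\|c_0(z)\|_{\ell_p} \geq (M - n)^{1/p - 1/q}$ for every $z$. Composing with $\psi$ produces a continuous odd map $S^n \to \mathcal{B}_{r,q}^s$ satisfying
\begin{equation}
    \|\psi(c_0(z))\|_{L_p(\Omega)} \geq c_4 h^{s + d/p - d/q} (M - n)^{1/p - 1/q} \geq C n^{-s/d},
\end{equation}
where the last step uses $h \geq c n^{-1/d}$ together with the positivity of the exponent $s + d/p - d/q$ (which follows from the compact embedding condition \eqref{sobolev-embedding-condition}) and $M - n \geq n$; the exponents then telescope to $-s/d$. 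Hence $s_n(\mathcal{B}_{r,q}^s)_{L_p(\Omega)} \geq C n^{-s/d}$, and Proposition \ref{sphere-manifold-lower-bound-prop} concludes the proof. I expect the main technical hurdle to be the termwise Besov-norm estimate for sums of disjointly-supported scaled bumps: for small shifts $\tilde h$ the differences $\Delta^k_{\tilde h} \phi_i$ remain disjointly supported and one merely sums coordinatewise in $\ell_q$, while for larger $\tilde h$ the factor $t^{-sr-1}$ in the Besov integral is required to suppress cross-terms, which will necessitate a careful splitting of the $t$-integration range.
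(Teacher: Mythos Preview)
Your proposal is correct and follows essentially the same approach as the paper: construct disjointly supported scaled bumps on a dyadic grid with $M\eqsim n$ cells, show that the resulting linear map sends $K_q^M$ into a fixed multiple of $\mathcal{B}_{r,q}^s$ while preserving the $\ell_p\to L_p$ norm up to the scaling factor $h^{d/p}$, then compose with the sphere map from Proposition~\ref{sphere-embedding-lq-lower-bounds-prop} and invoke Proposition~\ref{sphere-manifold-lower-bound-prop}. The only notable difference is that the paper first reduces to $r=1$ (using the embedding $B^s_1(L_q)\hookrightarrow B^s_r(L_q)$) and then appeals directly to the sub-additivity of the modulus of smoothness from \cite{devore1993besov} to obtain the Besov-norm bound on $\sum_i c_i\phi_i$, whereas you keep general $r$ and sketch a direct argument splitting the $t$-integration; both routes lead to the same estimate.
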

We remark that although we only consider Besov spaces, a completely analogous (and even simpler) proof applies to the unit balls of the Sobolev spaces as well.
\begin{proof}
    It suffices to consider the case $p\leq q$ and $r = 1$. We will use Proposition \ref{sphere-embedding-lq-lower-bounds-prop} to prove that \begin{equation}
        s_n(\mathcal{B}_{1,q}^s)_{L_p(\Omega)} \geq Cn^{-s/d}.
    \end{equation}
    Utilizing Proposition \ref{sphere-manifold-lower-bound-prop} then implies the result.
    
    Fix a non-zero $C^\infty$ function $\psi$ supported on the unit cube $[0,1]^d$. Divide the cube into $M = m^d$ sub-cubes $\Omega_i$ where $$2n \leq M \leq Cn.$$
    Let $x_i$ for $i=1,...,M$ denote the bottom corner of the $i$-th sub-cube and define
    \begin{equation}
        \psi_i(x) = \psi(m(x-x_i)).
    \end{equation}
    Consider the map $\Psi:\mathbb{R}^M\rightarrow L_p$ given by
    \begin{equation}
        \Psi(a) = \sum_{i=1}^M a_i\psi_i.
    \end{equation}
    We note that a change of variables implies the following identity relating the moduli of smoothness of $\psi_i$ and $\psi$,
    \begin{equation}
        \omega_k(\psi_i,t/m)_q = m^{-d/q}\omega_k(\psi,t)_q.
    \end{equation}
    Utilizing the sub-additivity of the modulus of smoothness (see \cite{devore1993besov}), we get
    \begin{equation}
        \omega_k(\Psi(a),t/m)_q^q \leq C\sum_{i=1}^M a_i^q\omega_k(\psi_i,t/m)^q_q = Cm^{-d}\omega_k(\psi,t)^q_q\sum_{i=1}^M a_i^q
    \end{equation}
    where $C$ is a constant depending upon $k$ and $q$. Taking $q$-th roots and integrating with respect to $t$ using the definition \eqref{besov-semi-norm-definition}, we obtain the bound
    \begin{equation}\label{bespv-space-upper-bound}
        \|\Psi(a)\|_{B^s_1(L_q(\Omega))} \leq Cm^{-d/q+s}\|\psi\|_{B^s_1(L_q(\Omega))}\|a\|_{\ell_q} \leq Cm^{-d/q+s}\|a\|_{\ell_q}.
    \end{equation}
    We also easily calculate that
    \begin{equation}\label{l-p-sum-lower-bound}
        \|\Psi(a)\|_{L_p(\Omega)} = m^{-d/p}\|\psi\|_{L_p(\Omega)}\|a\|_{\ell_p} = Cm^{-d/p}.
    \end{equation}
    Here the obvious modifications need to be made if $q = \infty$.
    
    Composing the map $c$ from Proposition \ref{sphere-embedding-lq-lower-bounds-prop} with $\Psi$ and rescaling by $Cm^{-d/q+s}$ gives a map $\tilde{c}:S^d\rightarrow \mathcal{B}_{1,q}^s$ by \eqref{bespv-space-upper-bound}. Moreover, by \eqref{l-p-sum-lower-bound} we have
    \begin{equation}
        \tilde{c}(z) \geq Cm^{d/q - d/p - s}(M-n)^{1/p - 1/q}.
    \end{equation}
    for all $z\in S^d$. Using that $m^d = M$ and $2n \leq M\leq Cn$ finally gives
    \begin{equation}
        \tilde{c}(z) \geq Cn^{-s}
    \end{equation}
    for all $z\in S^d$ as desired.
\end{proof}

\section{Acknowledgements}
We would like to thank Ronald DeVore, Albert Cohen, Guergana Petrova, and Przemyslaw Wojtaszczyk for helpful discussions and Nikola Kovachki and Samuel Lanthaler for bringing this problem to our attention. We would also like to thank the anonymous referees for their helpful comments and suggestions, and Manfred Faldum for pointing out a minor error in a previous version of the manuscript. This work was supported by the National Science Foundation (DMS-2424305 and CCF-2205004) as well as the Office of Naval Research (MURI ONR grant N00014-20-1-2787).

\bibliographystyle{amsplain}
\bibliography{refs}
\end{document}